\newcommand{\F}{{\mathbb F}}
\newcommand{\Hom}{\operatorname{Hom}}
\def\m{\mathfrak{m}}
\def\G{\mathrm{G}}
\theoremstyle{plain}
\newtheorem{theorem}{Theorem}[section]
\newtheorem{proposition}[theorem]{Proposition}
\newtheorem{lemma}[theorem]{Lemma}
\theoremstyle{definition}
\newtheorem{definition}[theorem]{Definition}
\newtheorem{remark}[theorem]{Remark}
\newtheorem{example}[theorem]{Example}
\newtheorem*{ack}{Acknowledgment}
\begin{document}
\date{October 19, 2022, revised May 8, 2023}
\author{Anthony A. Iarrobino\\[.05in]
{\small Department of Mathematics, Northeastern University, Boston, MA 02115,
USA.
}}
\title{Log-concave Gorenstein sequences}
\renewcommand\footnotemark{}
\thanks{\textbf{Keywords}: Artinian, extremal growth, codimension, Gorenstein sequence, Hilbert function, level sequence, log-concave, Macaulay condition,  SI sequence.}
\thanks{ \textbf{2020 Mathematics Subject Classification}: Primary: 13E10;  Secondary: 13D40, 13H10, 05E40.}
\thanks{ \textbf{Email address:} {a.iarrobino@northeastern.edu}}
\maketitle
\abstract  
We show here that codimension three Artinian Gorenstein sequences are log-concave, and that there are codimension four Artinian Gorenstein sequences that are not log-concave. We also show the log-concavity of level sequences in codimension two.
\vskip 0.2cm
{\it  Dedicated to the memory of friend and colleague, Jacques Emsalem.}
\tableofcontents
\section{Introduction.}  A codimension $r$ Gorenstein sequence is here a Hilbert function $H(A)$ that occurs for a codimension $r$ graded Artinian Gorenstein (AG) algebra $A$ over an infinite field $\F$. The codimension two Gorenstein sequences are the same as those for complete intersections $A(a,b)={\F}[x,y]/(x^a,y^b)$ - known to F.H.S.~Macaulay \cite{Mac0,Mac}. The codimension three Gorenstein sequences are known because of the Pfaffian structure theorem of D.~Buchsbaum and D.~Eisenbud \cite{BuEi}, (Lemma~\ref{1lem} below, see also \cite{St,Di}). The Hilbert function $H(A)$ of a graded Artin algebra is a sequence satisfying a certain condition determined by F.H.S.~Macaulay (Equation~\eqref{Mac3eq}, Lemma \ref{Mac2lem} below, \cite{Mac2}, and \cite[\S 4.2]{BrHe}). We will call such a sequence satisfying Equation \eqref{Mac3eq} a \emph{Macaulay sequence}.  The \emph{socle} of an Artinian algebra $A$ is $(0:\m_A)$ where $\m_A$ is its maximal ideal, and the \emph{socle degree} of $A$ is the highest degree of a socle element; when $A$ is Artinian Gorenstein, the socle degree of $A$ is the highest degree $j$ for which $H(A)_j\ne 0$.
We recall 
 \begin{lemma}\label{Mac2lem}[Macaulay's theorem\cite{Mac2}] Let $H=H(A)=(1,r,h_2,\ldots ,h_a,\ldots )$ be the Hilbert function of an algebra quotient of $R=\F[x_1,\,
\ldots,x_r]$. We may write uniquely, for $a\ge 1$, the \emph{Macaulay expansion}
 \begin{equation}\label{Mac2eq}
 h_a=\sum_{i=1}^a\binom{n_i}{i} \text { with } n_a>n_{a-1}>\cdots> n_{1}\ge 0.
 \end{equation}
 Then we have
\begin{equation}\label{Mac3eq}
h_{a+1}\le h_a^{(a)}: = \sum_{i=1}^{a}\binom{n_i+1}{i+1}.
\end{equation}
\end{lemma}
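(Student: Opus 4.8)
The plan is to reduce the statement to the combinatorial heart of the matter, the extremal behavior of lexicographic monomial ideals, and then to verify the inequality, with equality, for those ideals by a direct count. Throughout write $R=\F[x_1,\ldots,x_r]$, let $I\subset R$ be the homogeneous ideal realizing $H$, and set $A=R/I$, so that $h_d=\dim_\F A_d=\dim_\F R_d-\dim_\F I_d$ for all $d$. The first step is to replace $I$ by a monomial ideal. Fixing the degree-lexicographic term order $\prec$ and passing to the initial ideal $\mathrm{in}_\prec(I)$ preserves the Hilbert function, i.e. $\dim_\F(R/I)_d=\dim_\F\bigl(R/\mathrm{in}_\prec(I)\bigr)_d$ in every degree, because a Gr\"obner basis yields a monomial $\F$-basis of each graded piece of the quotient. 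Hence it suffices to bound $h_{a+1}$ under the assumption that $I$ is a monomial ideal.

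The second step is the substantive one, and is where I expect the main obstacle to lie: I would show that among all monomial ideals $J\subset R$ with $\dim_\F(R/J)_a=h_a$, the lexicographic-segment ideal $L$ (the one whose degree-$d$ part is spanned by a lex-initial segment of monomials) \emph{maximizes} $\dim_\F(R/J)_{a+1}$, equivalently \emph{minimizes} $\dim_\F J_{a+1}$. I would prove this by induction on the number of variables $r$ via the hyperplane-restriction mechanism: restricting modulo the last variable relates the growth of $R/J$ from degree $a$ to degree $a+1$ to growth data in one fewer variable, and an accompanying compression argument replaces $J_d$, degree by degree, by the lex segment of the same dimension without ever decreasing the quotient in the next degree. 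This is exactly the Macaulay/Kruskal--Katona compression phenomenon, and the delicate point is verifying that each compression (or each shifting) step does not decrease $\dim_\F(R/J)_{a+1}$; controlling this monotonicity carefully is the crux of the whole argument.

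Finally, for the lex ideal $L$ itself I would compute both quantities directly. The $h_a$ monomials of degree $a$ spanning $(R/L)_a$ are the lex-smallest such monomials, and peeling off the exponent of the leading variable $x_1$ organizes them into a nested family whose cardinalities are binomial coefficients; this produces the Macaulay expansion $h_a=\sum_{i=1}^a\binom{n_i}{i}$ of \eqref{Mac2eq} recursively, and applying the identical recursion one degree higher yields $\dim_\F(R/L)_{a+1}=\sum_{i=1}^a\binom{n_i+1}{i+1}=h_a^{(a)}$. Combining this with Steps~1 and~2 gives $h_{a+1}\le h_a^{(a)}$, with equality precisely for lexicographic ideals. (The uniqueness of the expansion in \eqref{Mac2eq}, needed so that $h_a^{(a)}$ is well defined, follows from the greedy maximality of $n_a$, then of $n_{a-1}$, and so on down to $n_1$.)
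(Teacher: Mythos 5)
Your proposal correctly identifies the standard modern route to Macaulay's bound: (1) degenerate to the initial ideal, which preserves the Hilbert function; (2) show that among monomial ideals with prescribed codimension $h_a$ in degree $a$, the lex-segment ideal minimizes $\dim_\F J_{a+1}$; (3) compute the growth of the lex ideal explicitly to obtain $h_a^{(a)}=\sum_{i=1}^a\binom{n_i+1}{i+1}$. Steps (1) and (3) are routine and your descriptions of them are sound, as is the greedy argument for uniqueness of the expansion \eqref{Mac2eq}. For what it is worth, the paper itself does not prove this lemma at all: it is recalled as a classical theorem with references to Macaulay and to \cite[Lemma 4.2.6, Theorem 4.2.10]{BrHe}, so there is no internal proof to compare against.

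The genuine gap is Step (2), which you yourself flag as ``the crux of the whole argument'' and then leave as a plan rather than an argument. The assertion that the lex segment is extremal --- equivalently, that compressing or shifting $J_a$ to the lex segment of the same dimension never decreases $\dim_\F(R/J)_{a+1}$ --- \emph{is} Macaulay's theorem; everything else in your outline is bookkeeping around it. A complete proof must actually carry out the induction on $r$ with the hyperplane-restriction (or compression) step, verify the monotonicity of each compression, and handle the numerical identity relating the expansion of $h_a$ in $r$ variables to that of the restricted function in $r-1$ variables. Without that, the proposal is an accurate table of contents for a proof, not a proof. One further small inaccuracy: equality in \eqref{Mac3eq} does not hold ``precisely for lexicographic ideals'' --- for instance $I=(x_2)\subset\F[x_1,x_2]$ is not a lex ideal yet attains equality in every degree; equality characterizes the numerical situation of maximal growth, not the lex ideals themselves (the correct refinement in this direction is Gotzmann persistence, which concerns what equality forces in subsequent degrees).
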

For further discussion see \cite[Lemma 4.2.6,Theorem 4.2.10]{BrHe}.
 When there is equality in Equation \eqref{Mac3eq} we term this \emph{maximum Macaulay growth} from degree $a$ to degree $a+1$ \cite[Section 4.2]{BrHe}.\par

\begin{definition}\label{SIdef} An SI sequence   $H=(1,r,\ldots, r,1_j)$ of socle-degree $j$ is a sequence satisfying both
\begin{align} h_i&=h_{j-i} \text{ for } 0\le i\le j/2;\notag\\
(\Delta H)_{\le j/2}& \text { is a Macaulay sequence},\label{SIeq}
\end{align}
where $(\Delta H)_i=h_i-h_{i-1}$  (take $h_{-1}=0$).
\end{definition}  We have
\begin{lemma}\cite{BuEi,St}\label{1lem}  A sequence $H=(1,3,\ldots, h_i,\ldots ,3,1)$ is a codimension three Gorenstein sequence if and only if $H$ is an SI sequence.
\end{lemma}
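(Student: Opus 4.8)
The plan is to prove the two implications separately, using exactly the two structural inputs named in the statement: Macaulay duality and the Buchsbaum--Eisenbud Pfaffian structure theorem \cite{BuEi} for the necessity of the SI conditions, and the structure theorem run in reverse, together with a genericity argument, for their sufficiency. Throughout, $R=\F[x_1,x_2,x_3]$ and $A=R/I$ denotes a graded Artinian Gorenstein algebra of socle degree $j$ with $H=H(A)=(1,3,\ldots,3,1)$.

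For necessity I would treat the two SI conditions \eqref{SIeq} in turn. The symmetry $h_i=h_{j-i}$ is a general feature of graded Artinian Gorenstein algebras and uses nothing about the codimension: since the socle $(0:\m_A)$ is one-dimensional and concentrated in degree $j$, the multiplication pairing $A_i\times A_{j-i}\to A_j\cong\F$ is perfect, so $h_i=\dim_\F A_i=\dim_\F A_{j-i}=h_{j-i}$. The condition that genuinely uses codimension three is the differentiability of the first half. Here I would invoke the structure theorem: $I$ is generated by the $2m\times 2m$ Pfaffians of a homogeneous skew-symmetric $(2m+1)\times(2m+1)$ matrix, and the minimal graded free resolution of $A$ over $R$ is self-dual. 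From the twists in this self-dual resolution one obtains $H_A(z)=K(z)/(1-z)^3$ with $K$ a palindromic polynomial of degree $j+3$, and one extracts from this explicit form that $(\Delta H)_{\le j/2}$ obeys the Macaulay growth bound \eqref{Mac3eq}, hence is a Macaulay sequence in the sense of Lemma~\ref{Mac2lem}. It is precisely the very rigid shape forced by the odd-size skew-symmetric matrix that makes this work, so this is where the hypothesis $h_1=3$ enters. (In characteristic zero one can motivate the same conclusion conceptually: a general linear section $A/\ell A$ is a codimension two quotient of $\F[x,y]$, and the Weak Lefschetz property identifies its Hilbert function with $(\Delta H)_{\le j/2}$; over an arbitrary infinite field $\F$, however, one cannot rely on Lefschetz and must argue directly from the resolution.)

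For sufficiency, given an SI sequence $H$ I would use its first difference $T=(\Delta H)_{\le \lfloor j/2\rfloor}$ — a Macaulay sequence with $T_0=1,\ T_1=2$ — to read off the generator degrees $d_1\le\cdots\le d_{2m+1}$ and the dual syzygy degrees $j+3-d_i$ of a prospective self-dual resolution whose numerator $1-\sum z^{d_i}+\sum z^{\,j+3-d_i}-z^{\,j+3}$ reproduces $H_A(z)=K(z)/(1-z)^3$; this is the combinatorial bridge converting the O-sequence data of $T$ into admissible matrix data. I would then take a generic homogeneous skew-symmetric matrix $\Phi$ with entries in these degrees and let $I$ be the ideal of its submaximal Pfaffians, so that by the structure theorem $R/I$ is Gorenstein of codimension three as soon as $I$ has codimension three (cf. \cite{St,Di}). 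The crux, and the step I expect to be the main obstacle, is the final Hilbert-function verification: one must show that for a generic $\Phi$ (using that $\F$ is infinite) the Pfaffian ideal actually attains codimension three and that no unexpected cancellation occurs in the resolution, so that $K(z)$ and hence $H(R/I)$ come out exactly as prescribed rather than as a strictly larger sequence. The symmetry and the Buchsbaum--Eisenbud self-duality handle the bookkeeping cleanly, but translating the O-sequence property of $T$ into the existence of enough forms in each degree to keep the generic resolution minimal and non-degenerate is where the real work lies.
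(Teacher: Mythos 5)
First, a point of comparison: the paper does not actually prove Lemma~\ref{1lem}. It states the result and defers entirely to the citations, saying only that the proof follows from the Buchsbaum--Eisenbud Pfaffian structure theorem \cite{BuEi}, with pointers to \cite{St}, \cite{Di} and \cite[Theorem 5.25]{IK}. Your outline is faithful to how those sources proceed --- symmetry of $H$ from the perfect pairing $A_i\times A_{j-i}\to A_j$, the Macaulay condition on $(\Delta H)_{\le j/2}$ from the self-dual resolution with an odd number of generators, and existence from a Pfaffian ideal built to prescribed degrees --- so as a roadmap it matches the intended argument.

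As a proof, however, it has two genuine gaps, one of which you acknowledge. For necessity, the claim that one ``extracts'' from $H_A(z)=K(z)/(1-z)^3$ with $K$ palindromic that $(\Delta H)_{\le j/2}$ satisfies the growth bound \eqref{Mac3eq} is precisely the content of Stanley's theorem in \cite{St}, and it is not a formal consequence of palindromy alone: one must use minimality of the resolution to constrain how the $2m+1$ generator degrees $d_1\le\cdots\le d_{2m+1}$ interlace with the dual syzygy degrees $j+3-d_i$, and then translate the resulting numerator into the statement that $(\Delta H)_{\le j/2}$ is an O-sequence. As written, this direction is a restatement rather than a proof. For sufficiency, you correctly isolate the crux --- that a generic alternating matrix $\Phi$ with the prescribed degree matrix has Pfaffian ideal of height three with no cancellation in the resolution --- but you leave it open; and the preceding step of ``reading off'' an admissible degree matrix from the O-sequence $T=(\Delta H)_{\le j/2}$ is itself a nontrivial combinatorial lemma (this is what \cite{Di} and \cite[Theorem 5.25]{IK} supply). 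A more self-contained existence argument in the literature avoids the genericity issue by exhibiting an explicit Gorenstein ideal with Hilbert function $H$, e.g.\ via linkage from a codimension two arithmetically Cohen--Macaulay ideal with $h$-vector $T$, or via an explicit apolar construction. In short: right skeleton, consistent with the paper's citations, but both load-bearing steps are asserted rather than carried out.
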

The proof follows from the D. Buchsbaum and D. Eisenbud  Pfaffian structure theorem for codimension three Gorenstein algebras \cite{BuEi}; see also \cite{Di} and \cite[Theorem 5.25]{IK}.\par
It is well known that in any codimension, the SI sequences are a subset of the Gorenstein sequences: when $r\ge 5$, the SI sequences are a proper subset of the Gorenstein sequences, which may be non-unimodal; when $r=4$ it is open whether the SI sequences might be all the Gorenstein sequences.  A result of N. Altafi shows that given a finite SI sequence $H$, there is always a strong Lefschetz Artinian Gorenstein algebra of Hilbert function $H$ \cite{Alt}.\par
\begin{definition}\label{lcdef}We say that a finite sequence $H=(h_0,h_1,\ldots, h_i,\ldots,h_j)$ is \emph{log-concave in a degree $i\in [ 1,j-1]$} if 
	\begin{equation}
		\label{eq:hflc}
		h_{i-1}\cdot h_{i+1}\leq h_i^2.
	\end{equation}
	The sequence $H$ is \emph{log-concave} if it is log-concave in each such degree $i$.
	\end{definition}
	See the R. Stanley 1989 survey \cite{St2},  the F. Brenti 1994 \cite{Bre}, and many more recent articles.
	 Log-concavity has a relation with the Hodge-Riemann property of certain algebras \cite{Ba,H,MMS,MuNY}.

\section{Codimension three Gorenstein sequences are log-concave.}
\begin{theorem}
	\label{cod3thm}
	Let $A$ be a standard graded AG algebra of socle degree $j$ with Hilbert function $H(A)=(h_0,h_1,\ldots,h_j)$ satisfying $h_1=3$.  Then the sequence $H(A)$ is log-concave.
	\end{theorem}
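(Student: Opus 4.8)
The plan is to combine the structure theorem \cref{1lem} with the codimension-two instance of Macaulay's growth bound \cref{Mac2lem}, applied not to $H$ itself but to its first difference. By \cref{1lem} the hypothesis $h_1=3$ forces $H$ to be an SI sequence, so by \cref{SIdef} it is symmetric, $h_i=h_{j-i}$, and $g:=(\Delta H)_{\le j/2}$ is a Macaulay sequence with $g_0=h_0=1$ and $g_1=h_1-h_0=2$, i.e.\ a codimension-two O-sequence. Since log-concavity in degree $i$ is, by the symmetry $h_i=h_{j-i}$, equivalent to log-concavity in degree $j-i$, it suffices to verify \eqref{eq:hflc} for $1\le i\le m$, where $m:=\lfloor j/2\rfloor$. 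Writing $S_i=\sum_{k=0}^i g_k$, telescoping gives $h_i=S_i$ for every $i\le m$, so the heart of the matter is the log-concavity of the partial-sum sequence $S_0,S_1,\ldots,S_m$, together with one boundary degree.

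For $1\le i\le m-1$ all three of $h_{i-1},h_i,h_{i+1}$ lie in the first half and equal $S_{i-1},S_i,S_{i+1}$. First I would record the elementary identity
\[
h_i^2-h_{i-1}h_{i+1}\;=\;S_i^2-S_{i-1}S_{i+1}\;=\;g_i^{\,2}-(g_{i+1}-g_i)\,S_{i-1},
\]
obtained by substituting $S_{i+1}=S_i+g_{i+1}$ and $S_i=S_{i-1}+g_i$ and expanding. Thus log-concavity in degree $i$ is equivalent to the single inequality $(g_{i+1}-g_i)S_{i-1}\le g_i^2$. Now I would split on the codimension-two growth of $g$ supplied by \cref{Mac2lem}: as $g_k\le k+1$ for all $k$, either $g_i\le i$, where the Macaulay bound forces $g_{i+1}\le g_i$ so the left side is $\le 0$; or $g_i=i+1$, where $g_{i+1}\le g_i+1$ and $S_{i-1}=\sum_{k=0}^{i-1}g_k\le\sum_{k=0}^{i-1}(k+1)=\tfrac{i(i+1)}{2}\le (i+1)^2=g_i^2$. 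In either case $(g_{i+1}-g_i)S_{i-1}\le g_i^2$, which gives \eqref{eq:hflc}.

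It remains to treat the boundary degree $i=m$, where the window $\{m-1,m,m+1\}$ crosses the symmetry axis. By symmetry $h_{m+1}=h_{j-m-1}$, which equals $h_{m-1}$ when $j=2m$ and equals $h_m$ when $j=2m+1$; in both cases $h_{m-1}h_{m+1}\le h_m^2$ reduces to the monotonicity statement $h_{m-1}\le h_m$, and this holds since $h_m-h_{m-1}=g_m\ge 0$ (an O-sequence is nonnegative). This also subsumes the small cases $m=1$. Finally, the symmetry $h_i=h_{j-i}$ transports every one of these inequalities to the degrees $i>j/2$, completing the argument.

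I expect the load-bearing step to be the identity-plus-case-analysis of the middle paragraph: the key point is that in codimension two a strict increase of $g$ can occur only from the full value $g_i=i+1$, and there the partial sum $S_{i-1}=\tfrac{i(i+1)}{2}$ is comfortably dominated by $g_i^2=(i+1)^2$, so passing to first differences turns log-concavity into a Macaulay inequality with room to spare. The only genuine bookkeeping subtlety is the symmetry-axis degree $i=m$, which the unimodality $g\ge 0$ dispatches once the two parities of $j$ are matched correctly; beyond that I do not anticipate a serious obstacle.
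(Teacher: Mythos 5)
Your proof is correct and follows essentially the same route as the paper: reduce to degrees $i\le\lfloor j/2\rfloor$ by Gorenstein symmetry, invoke Lemma~\ref{1lem} to see that $(\Delta H)_{\le j/2}$ is a codimension-two Hilbert function, and exploit the fact that such a sequence cannot increase once it drops below its maximal value $i+1$. The only differences are cosmetic: you package the final estimate as the exact identity $h_i^2-h_{i-1}h_{i+1}=g_i^2-(g_{i+1}-g_i)h_{i-1}$ with a two-case analysis (which also absorbs the initial range where $h_i=\binom{i+2}{2}$, handled in the paper via log-concavity of binomial coefficients), and you treat the boundary degree $i=\lfloor j/2\rfloor$ explicitly, which the paper leaves implicit.
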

\begin{proof}
	Note that it suffices to show that Equation \eqref{eq:hflc} holds for $1\leq i\leq \left\lfloor\frac{j}{2}\right\rfloor$, since for $i>\left\lfloor\frac{j}{2}\right\rfloor$, we have $1\leq j-i\leq \left\lfloor\frac{j}{2}\right\rfloor$ and hence Equation \eqref{eq:hflc} will hold for these $i$ by symmetry of the Hilbert function.  For each $1\leq i\leq \left\lfloor\frac{j}{2}\right\rfloor$, we have $h_i\leq \binom{h_1+i-1}{i}=\binom{i+2}{i}$, and if we have equality for every $i$, then $H(A)$ is log-concave since the binomial coefficients are log-concave.  Otherwise we may choose the smallest index $u, 1\leq u\leq \left\lfloor\frac{j}{2}\right\rfloor$ such that $h_u<\binom{u+2}{u}$.  Then of course Equation \eqref{eq:hflc} holds for $1\leq i\leq u-1$ for the preceding reason, and hence we need only check Equation \eqref{eq:hflc} for $u\leq i\leq \left\lfloor\frac{j}{2}\right\rfloor$. Let $j^\prime==\left\lfloor\frac{j}{2}\right\rfloor$. The following two observations are key:  by Lemma \ref{1lem}
	\begin{enumerate}[(i).]
		\item $H(A)$ is an SI sequence, and hence the first difference 
		\begin{equation}\label{delta1eq}\Delta H(A)_{\le j^\prime}=(1, 2, \ldots,\Delta H_{j^\prime})\end{equation}
		is the Hilbert function for some standard graded Artinian algebra of codimension $2$, and 
		\item The Hilbert function of a standard graded Artinian algebra of codimension $2$ is non-increasing after the initial degree $d$ of its defining ideal (here $d=\min\{i\mid\Delta H(A)_i<i+1\}$) so $ \Delta H(A)_{\le j^\prime}$ in Equation \eqref{delta1eq} satisfies 
		\begin{equation}\label{delta2eq}\Delta H(A)_{j^\prime}=(1,2,\ldots,d,\Delta_d,\ldots, \Delta_{j^\prime}), \text { with } d\ge \Delta_d\ge \Delta_{d+1}\ge \cdots \ge\Delta_{j^\prime}.
		\end{equation}
	\end{enumerate}
	 The second observation is well known (see \cite{Mac0},\cite[Lemma~1.3]{I})\footnote{The article \cite{Mac0} also determines all the Hilbert functions that do occur for local algebras of codimension two, as well as for the graded algebras.} and can be seen as follows:  Let $B=\F[x,y]/I$ be any standard graded Artinian algebra in codimension two, suppose that $I_p\ne 0$ and suppose that $f_1,\ldots,f_m\in I_p$ are linearly independent forms in $I$ of degree $p$.  Then certainly $xf_1,\ldots,xf_m$ are linearly independent in $I$ of degree $p+1$; also, if $f_1$ has maximum $y$-power among the set of $f_i$, then $yf_1,xf_1,\ldots,xf_m\subset \F[x,y]_{p+1}$ are linearly independent, showing that $\dim_\F(I_p)<\dim_\F(I_{p+1})$, and hence $G=H(B)=(1,2,\ldots,g_p,\ldots )$ satisfies
	 \begin{align*}
	 g_p&=\dim_\F(\F[x,y]_p)-\dim_\F(I_p)=(p+1-\dim_\F(I_p)\\\geq& p+2-\dim_\F(I_{p+1})=\dim_\F(\F[x,y]_{p+1})-\dim_\F(I_{p+1})=g_{p+1}.
	 \end{align*}

	Finally, for any integer $i$ satisfying $d\leq i\leq \left\lfloor\frac{j}{2}\right\rfloor$, we must therefore have 
	\begin{align*}
	h_i^2&\geq h_i^2-(h_i-h_{i-1})^2=(h_i-(h_i-h_{i-1}))(h_i+(h_i-h_{i-1}))\\
	&\geq (h_i-(h_i-h_{i-1}))(h_i+(h_{i+1}-h_{i}))=h_{i-1}h_{i+1},
	\end{align*}
	and hence $H(A)$ is log-concave.   
\end{proof}

\section{Codimension four Gorenstein sequences that are not log-concave.}
\noindent
Many codimension four Gorenstein sequences, as $H=(1,4,10,14,10,4,1)$, are log-concave.  We first show that there are codimension four SI sequences that are not log-concave
(Proposition \ref{firstprop}); then we show that there are codimension four SI sequences that are not log-concave for an arbitrarily large consecutive sequence of degrees (Proposition~\ref{multprop}).\par
The codimension four Gorenstein sequences $H$ include the SI sequences, those satisfying $\Delta H_{\le j/2}=(1,3,\ldots )$ is the Hilbert function of a codimension three graded Artin algebra $A=R/I$ (Definition \ref{SIdef}). We first restrict to codimension four SI sequences satisfying  
\begin{equation}\label{deltaeq}\Delta H_{\le j/2}=(1,3,\ldots, r_k, b,c) \text { where } r_k=\binom{k+2}{2}=\dim_\F R_k.
\end{equation}
We denote by $S={\F }[x,y,z,w]$, and let $s_k=\dim_{\F}S_k=(1+3+\cdots +r_k)=\binom{k+3}{3}.$
 Then the sum function of $\Delta (H)_{\le j/2}$ above satisfies $$H_{\le j/2}=\left(1,4,\ldots, s_k, s_k+b,s_k+b+c\right). $$ The log-concavity condition \eqref{eq:hflc} here in degree $k+1$ for $H$ is 
\begin{equation}\label{lceq} s_k(s_k+b+c)<(s_k+b)^2 \text { or, equivalently, } s_k(c-b)<b^2.
\end{equation}
 Keeping $b,c$ constant with $c>b$ then this certainly is negated for $k$ large enough.
 The next idea is to choose $b$ suitably and let $s_k+b$  in degree $k+1$ to $s_k+b+c$ in degree $k+2$ have maximum Macaulay growth (see Lemma \ref{Mac2lem}).  \par
 The dimension $\dim_\F (\F[x,y,z,w]_k)=s_k$. For codimension three, if $h_a<r_a$ we will denote by $\delta (h_a) =h_a^{(a)}-h_a$: here, this is just the number of terms in the Macaulay expansion of  Equation \eqref{Mac2eq} with $n_i=i+1$. Then, taking  $a=k+1,h_a=s_k+b, s_{k+2}=h_a+c$ with $c=h_a+\delta(h_a)$ (maximum Macaulay growth) the log-concavity condition Equation \eqref{lceq} becomes $\delta\cdot s_k\le b^2$. Thus, to violate log-concavity in degree $k+1$ for a Hilbert function sequence $H$ having as its key entries $h_k=s_k, h_{k+1}=s_k+b, h_{k+2}=s_k+b+\delta$ we need only assure
 \begin{equation}\label{nlceq}\delta\cdot s_k>b^2.
 \end{equation}
 \begin{remark}\label{boundrem}
 Recall that the Gotzmann regularity degree of the constant polynomial $\{s\}$ is itself $s$ (see \cite[Proposition C.32]{IK}). This implies for an SI sequence $H$ that once $(\Delta H)_i\le s$ for an integer $i\in [s,j/2]$ then $(\Delta H)_{\le j/2}$ is non-increasing in higher degrees than $i$.  Also, in order for $\delta=h_a^{(a)}-h_a\ge 2$ we must have $h_a\ge 2a+1$, with equality when $h_a= \binom{a+1}{a}+\binom{a}{a-1}$. For $\delta=h_a^{(a)}-h_a\ge 3$ we need $h_a\ge 3a$, with equality when  $h_a= \binom{a+1}{a}+\binom{a}{a-1}+\binom{a-1}{a-2}$. Evidently, for $\delta\ge 4$ we need 
 \begin{align}
 b=h_a&\ge \delta\cdot a-(2+\cdots+(\delta -2))\notag\\
 &=\delta \cdot a-\delta (\delta-3)/2.\label{keq}
 \end{align}
 These inequalities for $\delta\ge 2$ will greatly affect our search for small examples of SI  sequences in codimension four that are not log-concave - that satisfy Equation~\eqref{nlceq}.
 \end{remark} 
 
We will denote by $H_{a\to b}$ the subsequence $(h_a,h_{a+1},\ldots,h_b)$ of $H$.
 
\begin{proposition}[SI sequences in 4 variables that are not log-concave]\label{firstprop}
We give a series of minimal examples, depending on the choice of  $\delta$. \vskip 0.2cm\noindent
{\bf Case $\delta=1$.} First we consider $\delta=1$ and take $b=\binom{s+1}{s}$, in degree  $s=k+1$. We need $s_k>b^2=(s+1)^2$. Taking $\delta=1,b=7_6, k=5$, so $s_5=56>7^2$ we have $$H=(1,4,10,20,35,56,63,71,63,56,35,20,10,4,1_{14}),\quad  n=449,$$ then $h_5\cdot h_7=56\cdot 71=3976>3969=63^2=h_6^2$.\par
Taking $\delta=1,b=8_7,k=6$ so $s_6=84>8^2$ we have
$$H=(1,4,10,20,35,56,84,92,101,92,84,56,35,20,10,4,1_{16}),\quad n=705, $$
 then $h_6\cdot h_8=84\cdot 101=8484>8464=92^2=h_7^2$. Since $84>9^2$ we have a second example  where $\delta=1, b=9$, $H_{8\to12}=(84,93,103,93,84)$ of socle degree $16$ and length $n=709$, also not log-concave in degree $7$, as $h_6\cdot h_8=84\cdot 103=8652>8649=93^2=h_7^2.$
 \par
 In general, taking $\delta=1, k>> 5$, we may choose $b_{k+1}$ (that is, $b$ in degree $k+1$) satisfying $k+2\le b\le (k/6)^{3/2}$  (asymptotically, not for small $b$) that will satisfy the conditions of Remark \ref{boundrem} and also satisfy $s_k>b^2$, Equation \ref{nlceq}, so we will obtain again a non log-concave $H$.\vskip 0.2cm\noindent
 {\bf Case $\delta=2$}.
Now taking $\delta=2, b=23$ we have $2s_{10}=2(286)=572>529=23^2$ and $23$ satisfies 
$23=2(11)+1$, the lower bound from Remark \ref{boundrem}.  This is the lowest pair $\delta=2,b=23$ with $a=11$ 
satisfying Equation \eqref{nlceq}, and leads to an example of non log-concave
$H$ of socle degree $24$, whose key entries are
\begin{equation}\label{lowestdelta2eq}
H_{10\to14}=(286,309,334,309,286),
\end{equation}
satisfying $h_{10}\cdot h_{12}=286\cdot 334=95524>95481=309^2=h_{11}^2$, so $H$ of length $n=2954$ that is non  log-concave in degree $11$.
\par
Now taking $\delta = 2, b=25$ we have $2s_{11}=2(364)=728>625=25^2=\Delta h_{12}^2$ so we have  new  $H$ of socle degree $26$ whose key entries are
$$H_{11\to15}=(364,389,416,389,364),$$
satisfying $h_{11}\cdot h_{13}=364\cdot 416=151424>151321=389^2=h_{12}^2$, so $H$ of length $n=4034$ that is non  log-concave in degree $12$. Evidently, we may replace $25$ by $26$ as
also $728>26^2=676$. Then the key entries of $H$ would be
$$\text { for } b=26, H_{11\to15}=(364,390,418,390,364).$$
This sequence of examples with $\delta=2$ can evidently be continued, the next arises from $\delta=2, b=27$, $2s_{12}=2(455)=910>729=(27)^2$, so gives an SI sequence $H$ of socle degree $28$ with
key entries
$$H_{12\to16}=(455,482,511,482,455),$$
satisfying $h_{12}\cdot h_{14}= 232505>232324=482^2=h_{13}^2$. Evidently, we may replace $27$ by $b\in [27,30]$ as $910>30^2$.\vskip 0.2cm
The general $\delta=2$ case with fixed $k$ and lowest $b=2 k+3$ will be $s_k,  k\ge  10$ satisfying $2s_k>(2k+3)^2$, leading to an SI sequence $H=(1,4,\ldots,4,1_{2k+4})$ of socle degree $2k+4$, with key entries
$$H_{k\to (k+4)}=(s_k,s_k+2k+3,s_k+4k+8,s_k+2k+3,s_k),$$
of length $n=2\binom{k+4}{4}+3s_k+8k+14,$ that is not log-concave in degree $k+1$. \par Given $k$, the maximum $b$ satisfying Equation \eqref{nlceq} is $\sqrt{2s_k}$; for  $k=25$ this is $b=\lfloor\sqrt{2s_{25}}\rfloor=\lfloor\sqrt{6552}\rfloor=80$.  When $(k,b)$ is a fixed pair, satisfying $2k+3\le b\le \sqrt{2s_k}$ the key entries in the corresponding non log-concave $H$ of socle degree $2k+4$ are
$$H_{k\to (k+4)}=(s_k,s_k+b,s_k+2b+2,s_k+b,s_k),$$
and $H$ has length $2\binom{k+4}{4}+3s_k+4b+2$.
\vskip 0.2cm\noindent
{\bf Case $\delta=3$}. We have $3\cdot s_{18}=3\cdot \binom{20}{3}=3420>3249=9(19)^2$, so taking $(\delta,k,b)=(3,18,3\cdot 19)$ we generate a smallest $H$ for $\delta=3$, here of socle degree $j=40$ that is not log-concave in degree $19$, that has key $H$ values
$$H_{18\to 22}=(1140,1197,1257,1197,1140).$$
\vskip 0.2cm\noindent
{\bf Case $\delta\ge 4$.}  To satisfy Remark \ref{boundrem} and Equation \eqref{nlceq}, we must have $\delta\cdot  s_k\ge b^2$, or $\delta \cdot\binom{k+3}{k}> b^2$, where also $b\ge \delta k$.
 Solving for $b_{\max}$, and approximating $\binom{k+3}{k}$ by $k^3/6$, we have $\delta k^3/6\le b_{\max}^2$, so if $b$ is in the interval
 \begin{equation}\label{beq}  b\in [\delta \cdot k, \,\delta^{1/2}\cdot k^{3/2}/6^{3/2}],
 \end{equation}
 then the triple $(\delta,k,b)$ will produce a codimension four SI sequence $H$ with key entries 
 $$H_{k\to k+4}=(s_k,s_k+\delta\cdot b, s_k+2\delta\cdot b, s_k+\delta\cdot b,s_k)$$ 
 and socle degree $2k+4$ that is non log-concave in degree $k+1$.
\end{proposition}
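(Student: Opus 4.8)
The plan is to read the proposition as a single construction instantiated at each value of $\delta$, and to verify for every exhibited triple $(\delta,k,b)$ that the resulting sequence is a genuine codimension-four SI sequence that fails \eqref{eq:hflc} at the claimed degree. Given $\delta\ge 1$ and a pair $(k,b)$, I would let $H$ be the symmetric sequence of socle degree $j=2k+4$ whose first-half first difference is
\[
(\Delta H)_{\le j/2}=\bigl(1,3,6,\ldots,r_k,\,b,\,b+\delta\bigr),
\]
so that the key entries are $h_k=s_k$, $h_{k+1}=s_k+b$ and $h_{k+2}=s_k+2b+\delta$. Here $\delta$ is taken to be the maximal Macaulay growth increment of the first difference from degree $k+1$ to degree $k+2$, i.e.\ $\delta=b^{(k+1)}-b$ in the notation of \eqref{Mac3eq}. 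The organizing idea, already isolated before the proposition, is that letting $\Delta H$ grow maximally pushes $h_{k+2}$ as high as Macaulay permits and so makes the failure of log-concavity in degree $k+1$ as easy as possible to arrange.

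For each triple two points must be checked. First, that $H$ is an SI sequence: by Definition~\ref{SIdef} and the symmetry of $H$ it suffices that $(\Delta H)_{\le j/2}$ obey \eqref{Mac3eq}. The head $(1,3,6,\ldots,r_k)$ is the list of binomials $\binom{i+2}{2}$ and grows maximally; the drop $r_k\to b$ is automatically admissible; and the single delicate step $b\to b+\delta$ is admissible with equality exactly because $\delta$ was chosen to be the maximal growth increment. Guaranteeing that this increment is the prescribed $\delta$ is the role of Remark~\ref{boundrem}: its thresholds $b\ge 2k+3$ for $\delta=2$, $b\ge 3k+3$ for $\delta=3$, and $b\ge\delta(k+1)-\delta(\delta-3)/2$ for $\delta\ge 4$ — each realized by taking $b$ to be a sum of $\delta$ consecutive top binomials $\binom{i+1}{i}$ — are precisely the values at which the degree-$(k+1)$ expansion carries $\delta$ growing terms. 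Second, that $H$ is not log-concave in degree $k+1$: substituting the key entries into \eqref{eq:hflc}, the inequality $h_kh_{k+2}\le h_{k+1}^2$ collapses to $\delta\cdot s_k\le b^2$, so non-log-concavity is exactly \eqref{nlceq}.

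With the recipe in place, each case $\delta=1,2,3,\ge 4$ becomes the arithmetic of producing admissible triples and recording the sequence, whose length is
\[
n=2\binom{k+4}{4}+3s_k+4b+\delta
\]
by the hockey-stick identity $\sum_{i=0}^{k}\binom{i+3}{3}=\binom{k+4}{4}$ applied to the symmetric sum. For each $\delta$ I would locate the smallest $k$ for which the window of admissible $b$,
\[
b\in\bigl[\,b_{\min}(\delta,k),\ \sqrt{\delta\,s_k}\,\bigr],
\]
is nonempty; this produces the minimal (shortest) example. For instance the $\delta=2$ threshold is $k=10$, since $2s_9=440<441=21^2$ while $2s_{10}=572>529=23^2$, which fixes the minimal triple $(2,10,23)$; the general $\delta\ge 4$ window is the interval \eqref{beq}, obtained by combining $b\gtrsim\delta k$ with $b<\sqrt{\delta s_k}$ and the estimate $s_k\approx k^3/6$.

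The main obstacle is the tension between these two constraints rather than any isolated hard step: the Macaulay lower bound forces $b\gtrsim\delta k$ so that maximal growth can realize increment $\delta$ at all, while \eqref{nlceq} forces $b<\sqrt{\delta s_k}\approx\sqrt{\delta}\,k^{3/2}/\sqrt6$, so the window is nonempty only once $k$ is large relative to $\delta$ (roughly $k\gtrsim 6\delta$). This is why the examples necessarily live in high socle degree, and why pinning down the minimal $k$ for each $\delta$ needs the exact expansion bookkeeping of Remark~\ref{boundrem} in place of the asymptotics. One further care is that within the window $b$ must be chosen so that its expansion yields increment exactly $\delta$: appending small terms $\binom{i}{i}$ leaves $\delta$ unchanged, whereas reaching the next top binomial $\binom{i+1}{i}$ raises it. Once $b$ is fixed, confirming the increment by one Macaulay expansion and verifying $\delta s_k>b^2$ are then routine.
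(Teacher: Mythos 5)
Your proposal is correct and follows essentially the same route as the paper: you reconstruct the general recipe $\Delta H_{\le j/2}=(1,3,\ldots,r_k,b,b+\delta)$ already set up in Equations \eqref{deltaeq}--\eqref{nlceq} and Remark \ref{boundrem}, reduce non-log-concavity in degree $k+1$ to $\delta\cdot s_k>b^2$, and then treat each $\delta$ by the same threshold arithmetic (your identification of the minimal triple $(2,10,23)$ and the unified length formula $n=2\binom{k+4}{4}+3s_k+4b+\delta$ both check out against the paper's examples). No gaps.
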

\begin{example}[Lengthening the non log-concave examples $H$]\label{lengthenex}
The lowest $\delta=2$ example is Equation \eqref{lowestdelta2eq} of socle degree $j=24$, where $H_{10\to14}=(286,309,334,309,286)$ and $\Delta H_{10\to12}=(r_{10},r_{10}+23,r_{10}+25)=(66,89,91)$. We may lengthen this by adding a further sequence after $\Delta H_{12}=114$ that satisfies the Macaulay inequalities Equation \eqref{Mac3eq} for codimension three. For example we may adjoin to $\Delta H$ the sequence $(16_{13},17_{14},8_{15},7_{16})$, leading to a key sequence
$$H^\prime_{10\to22}=(286,309,334,350,367,375,382,375,367,350,334,309,286),$$
of an AG Hilbert function $H^\prime $ of socle degree $j=32$ that is still non log-concave in degree $11$.
\end{example}
In the next Proposition, we extend the maximum growth portion of $\Delta H$, by a distance $\ell$, obtaining in some cases, especially when $s_b>>b^2$ a new AG sequence $H^\prime(\ell)$  having a longer consecutive subset of non log-concave adjacent triples, whose length we can specify. Further, we specify $h_{k+u}h_{k+u+2}-h_{k+u+1}^2$ for each $u\in [0, \ell]$, showing that it is a sequence with linear first differences.
\begin{proposition}[Sequences $H$ having multiple non log-concave places]\label{multprop}
Assume that $(\delta,k,b)$ is a triple as in Proposition \ref{firstprop} for which $\delta\cdot s_k>b^2$, and let $H$ be  the related codimension four Hilbert function of socle degree $j=2k+4$, whose key part is
$$H_{k\to k+4}=(s_k,s_k+b,s_k+2b+\delta, s_k+b,s_k)$$
 and which is non log-concave in degree $k+1$. Let $\ell$ be a positive integer.
We define an extended sequence $H^\prime(\ell)$ identical to $H$ in degrees $i\in [0,k+2]$, and satisfying
 \begin{align}
 \Delta H^\prime(\ell)_{t}&=\Delta T_t \text { for } t\le k+1 \text { and for } t> k+\ell+2;\notag\\
 \Delta H^\prime(\ell)_{k+1\to k+\ell+2}&=(b,b+\delta, b+2\delta,\ldots, b+(\ell-1)\delta,b+(\ell)\delta, b+(\ell+1)\delta).\label{deltaprimeeq}
 \end{align}
 Then $H^\prime(\ell)$ has socle degree $2k+4+2\ell$, and we have for each $u\in [1,\ell+2]$
 that  $h^\prime_{k+u}=s_k+bu+\delta (1+2+\cdots +(u-1))$. Letting $\delta=1$ we have for each such $u$
 \begin{equation}\label{exteq}
 {h^\prime}_{k+u}\cdot {h^\prime}_{k+u+2}-({h^\prime}_{k+u+1})^2= s_k-b^2-((b+1)+\cdots +(b+u)).
 \end{equation}
 Assume now that 
 \begin{equation}\label{upperbound3eq}
 b\ell+\frac{\ell(\ell+1)}{2} < s_k-b^2.
 \end{equation}
  Then $H^\prime(\ell)$ is non log-concave in the positions $k+1,k+2,\ldots, k+\ell, k+\ell+1$.\par
\end{proposition}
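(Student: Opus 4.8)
The plan is to verify four things in turn: that $H^\prime(\ell)$ is a bona fide codimension four SI (hence Gorenstein) sequence of the asserted socle degree; the closed form for $h^\prime_{k+u}$; the defect identity \eqref{exteq}; and the non log-concavity conclusion. Essentially all of the conceptual content sits in the first item, while the remaining three amount to a summation and one quadratic identity.

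For the first item, by Definition \ref{SIdef} it suffices to check that $\Delta H^\prime(\ell)_{\le k+2+\ell}$ is a codimension three Macaulay sequence; then $H^\prime(\ell)$ is its symmetric double, a codimension four SI (hence Gorenstein) sequence of socle degree $2(k+2+\ell)=2k+4+2\ell$. In degrees $\le k+1$ this first difference agrees with $\Delta H$ and is already Macaulay, and the single transition $r_k\to b$ in degree $k+1$ is legal because $b<r_{k+1}=r_k^{(k)}$. On the window $[k+1,\,k+2+\ell]$ it is the arithmetic progression $b,b+\delta,\ldots,b+(\ell+1)\delta$, and the crux is that this progression is exactly iterated maximum Macaulay growth. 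I would prove the small lemma that maximum growth preserves the deficiency $\delta$: if $h=\sum_{i}\binom{n_i}{i}$ is the degree-$a$ Macaulay expansion, then $h^{(a)}=\sum_i\binom{n_i+1}{i+1}$ is the degree-$(a+1)$ expansion, and a term is critical (top index one more than its degree, i.e.\ $n_i=i+1$) precisely when the term it came from was critical; hence $\delta(h^{(a)})=\delta(h)$. Since $b$ was chosen with $\delta(b)=\delta$ (Remark \ref{boundrem}), applying this repeatedly gives $b^{(k+1)}=b+\delta$, $(b+\delta)^{(k+2)}=b+2\delta$, and so on, so each step is legal maximum growth with increment exactly $\delta$. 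This is what makes the progression a valid Macaulay sequence.

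The second item is a direct summation: since $H^\prime_k=s_k$ and $\Delta H^\prime_{k+t}=b+(t-1)\delta$ for $t=1,\ldots,u$, one gets $h^\prime_{k+u}=s_k+\sum_{t=1}^u\big(b+(t-1)\delta\big)=s_k+bu+\delta(1+2+\cdots+(u-1))$, valid for $u\in[1,\ell+2]$ since the progression runs through degree $k+\ell+2$. For the third item set $\delta=1$ and write $\phi(u)=h^\prime_{k+u}=s_k+bu+\binom{u}{2}$, a quadratic with first difference $\phi(u+1)-\phi(u)=b+u$. Substituting $\phi(u+2)=\phi(u+1)+(b+u+1)$ and $\phi(u+1)=\phi(u)+(b+u)$ into $\phi(u)\phi(u+2)-\phi(u+1)^2$ telescopes to $\phi(u)-(b+u)^2$, and expanding yields $s_k-b^2-\big(bu+\binom{u+1}{2}\big)=s_k-b^2-\big((b+1)+\cdots+(b+u)\big)$, which is \eqref{exteq}. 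For the final conclusion I observe that this right-hand side is strictly decreasing in $u$, and its value at $u=\ell$ is $s_k-b^2-\big(b\ell+\ell(\ell+1)/2\big)$, which is positive exactly under hypothesis \eqref{upperbound3eq}; hence the defect at degree $k+u+1$ is positive for every $u=0,1,\ldots,\ell$, i.e.\ $H^\prime(\ell)$ is non log-concave in the $\ell+1$ degrees $k+1,k+2,\ldots,k+\ell+1$.

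The main obstacle is the first item: one must be certain that prescribing $\Delta H^\prime$ to grow by the constant $\delta$ actually produces a legal Hilbert function, and this rests entirely on the deficiency-preservation lemma together with the observation that maximum growth can always be iterated. Everything downstream---the quadratic $\phi$, the telescoped defect identity, and the monotonicity argument---is routine once this is secured. I would also take care with the edge indices: the case $u=0$ must recover the original non log-concave place in degree $k+1$ with defect $s_k-b^2>0$, and the case $u=\ell$, which needs $h^\prime_{k+\ell+2}$ at the very peak $j^\prime/2=k+2+\ell$, must stay inside the progression window, so that the identity \eqref{exteq} genuinely governs all of the claimed positions.
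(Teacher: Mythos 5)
Your proof is correct and follows essentially the same route as the paper's: the closed form $h^\prime_{k+u}=s_k+bu+\delta\sum_{\nu=1}^{u-1}\nu$, the factored identity ${h^\prime}_{i-1}{h^\prime}_{i+1}-{h^\prime}_i^2={h^\prime}_i-(b+u-1)(b+u)$, and the conclusion that the defect stays positive for $u=0,\ldots,\ell$ under \eqref{upperbound3eq} are exactly the paper's computation. The only genuine addition is your explicit verification that the arithmetic-progression difference sequence is a legal Macaulay sequence (the deficiency-preservation lemma); the paper's proof omits this step, treating it as part of the construction via Remark \ref{boundrem} and the discussion of maximum Macaulay growth preceding Proposition \ref{firstprop}.
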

\begin{proof} We generalize the proof of Equations \eqref{lceq} and \eqref{nlceq}. Beginning with $(\delta=1, b, k)$  then using the definition of $\Delta H^\prime$ in Equation \eqref{deltaprimeeq} and recalling that $H_k=s_k$ we conclude that $H^\prime=(1,h^\prime_1,\ldots)$ satisfies 
$$\text { for } u\in [0,\ell],\,\, h^\prime_{k+u}=s_k+bu+\sum_{\nu=1}^{u-1}\nu.$$
Then, similarly to Equation \eqref{lceq} we have for $u\in [1,\ell+1]$ and $ i=k+u$
\begin{align}  {h^\prime}_{i-1}{h^\prime}_{i+1}-{h^\prime}_i^2&=({h^\prime}_i-(b+u-1)))({{h^\prime}}_i+b+u)-{h^\prime_i}^2\notag\\
&={h^\prime}_i-(b+u-1)(b+u)\notag\\
&=s_k-b^2-b(u-1)-\sum_{\nu = 1}^{u-1}\nu\notag\\
&=s_k-b^2-b(u-1)-\frac{u^2-u}{2}.
\end{align}
The last statement of the Proposition concerning non log-concavity for $H^\prime(\ell) $ for 
$\ell$ satisfying Equation \eqref{upperbound3eq} follows.
\end{proof}
\begin{example}\label{repeatex}
We take  from Proposition \ref{firstprop} the first sequence $H$ where $\delta=1, k=8, b=10_9$ with key entries $H_{8\to 12}=(165_8,175,186,175,165)$ and recall that $h_8h_{10}-h_9^2=30690-30625=65=s_8-b^2=165-10^2.$ We choose $\ell$ as in Equation \eqref{exteq}, the maximum such that for $b=10$
$$(b+1)+(b+2)+\cdots +(b+\ell)< 65,  \text { from  which we have } \ell=4.$$ We now
consider 
$\Delta H^\prime(4)_{\le j/2}=(1,3,6,10,15,21,28,36,45,10_{9},11,12,13,14,15),$ so
\begin{align}H^\prime(4)&=(1,4,10,20,35,56,84,120,165,175_9,186,198,211,225,240_{14},\notag\\
&\quad 225, 211,198,186, 175, 165,120,84,56 ,35,20,10,4,1_{28}).
\end{align}
of socle degree $28$, which is non log-concave in degrees $9,10,11,12,13$ with
successive  differences (writing $h_i$ for ${t^\prime}_i$)
$$h_{i-1}h_{i+1}-h_i^2=(65_9,54_{10},42_{11},29_{12},15_{13}),$$ whose second differences are $(11,12,13,14)$, consistent with Equations \eqref{deltaprimeeq} and \eqref{exteq}. Note that $H^\prime(5)_{13,14,15}=
(225,240_{14},256)$ whence $h_{i-1}h_{i+1}-h_i^2=0$ for $i=16$, so $H^\prime(5)$ begins log-concavity in degree $16$; this is again consistent with Equation \eqref{exteq} and shows that the maximum length of a non log-concave sequence for  $H^\prime$ arising from this $H$ is five.
\end{example}

Note, that combining with the idea of Example \ref{lengthenex}, we can make codimension four Gorenstein sequences $H^\prime$ with certain specified consecutive subsequences of degrees where
$H^\prime$ is non log-concave, but that may have gaps between them of degrees where $H^\prime$ is log-concave.

\begin{remark}
According to \cite[Theorem 3]{H} these non log-concave $h$-vectors cannot be the $h$-vectors of  a matroid representable in characteristic zero.  Chris McDaniel poses the question of whether a related $f$-vector: $f(t)=h(1+t)$ of one of these non log-concave $h$ might be non-unimodal.
\end{remark}
\section{Higher codimension Gorenstein sequences, level sequences.}

In codimension at least five, Gorenstein sequences need not be unimodal. For example, R. Stanley used an idealization construction \cite{Na} or \cite[\S 3.1]{IMM} to show $H=(1,13,12,13,1)$ is a Gorenstein sequence for an algebra $A\times  \Hom(A,{\F})$ where $A= {\F}[x,y,z]/(x,y,z)^4$ of Hilbert function $(1,3,6,10)$ \cite{St},  Analogous examples, sometimes requiring higher socle degree $j$, can be made in codimensions at least five: in codimension five the lowest socle-degree non-unimodal Gorenstein sequence known has $j=16$ and contains the subsequence
$(h_7,h_8,h_9)=(91,90,91)$ \cite{BeI} (see also \cite{MZ,MNZ} for further discussion of unimodality for Gorenstein sequences). 
\subsection{Higher codimension SI sequences.} SI sequences by construction are unimodal. We note that, as in codimension four, some SI sequences in codimension five or larger are log-concave, and some are not. We illustrate both with the next example.
\begin{example}[Codimenson five] (i). Consider the codimension five SI sequence $H(j)=(1,5,t_2,\ldots)$ having socle degree $j$ with $$\Delta H(j)=(1,4,10,14,20, 27,35,44,\ldots )$$  where $\Delta H(j)_k=\binom{k+2}{k}+\binom{k}{k-1}+\binom{k-2}{k-2}$ for $3\le k\le j/2$ and $j\ge 6$. Then $H(j)_{\le j/2}=(1,5,15,29,49,76,111,155,\ldots,)$ where
\small $$H(j)_k=-1+\binom{k+3}{k}+\binom{k+1}{k-1}+\binom{k-1}{k-2} \text{ for } 3\le k\le j/2,$$
\normalsize
 and  $H(j)_k=H(j)_{j-k} \text { for } k\ge j/2.$
 It is easy to verify that for $j\ge 6$ each such $H(j)$ is log-concave. The reason is that  the sequence $H(j)_k$ for $k\le j/2$ is dominated by its fast-growing term $\binom{k+3}{k}$ which is cubic in $k$.\vskip 0.2cm
 (ii). This suggests that the way to create non log-concave examples of SI sequences is to imitate the construction in codimension four, that is, set $\Delta H^\prime = (1,4,r_2,\ldots, r_{u-1},\delta_u,\ldots)$ where the non-zero terms of the Macaulay expansion of $\delta_u$ have the form $\binom{w+1}{w}$  - that is, we assume that after a certain degree $u$ the  first difference $\Delta H^\prime$ has maximal Macaulay growth, and is linear in $k$.  Consider then
  $\Delta H^\prime=(1,4,10,20,5,6,7,\ldots)$ so that $H^\prime=(1,5,15,35,40,46,53, 61,
 70,\ldots$. Here $35\cdot 46=1610>40^2$, so taking $j=10$ and $H^\prime=(1,5,15,35,40,46,40,35,15,5,1_{10})$ gives a non-log-concave sequence. Likewise, $40\cdot 53>46^2$, so taking
 $j=12$  the SI sequence $H^\prime=(1,5,15,35,40,46,53,46,40,35,15,5,1_{12})$ is non-log-concave in two adjacent places. But it stops there, as $46\cdot 61=2806<2809=53^2$.\par
To get a longer non-log-concave consecutive subsequence, it suffices to begin the minimal linear growth later.
For example, taking $\Delta H^\prime= (1,4,10,20,35,6,7,8,\ldots)$ so 
\begin{equation}  H^\prime_{\le j/2}=(1,5,15,35,70,76,83,91,100, 110,121\ldots),
\end{equation} we have $h^\prime_{k-1}\cdot h^\prime_{k+1}>{h^\prime_k}^2$ for $k=4,5,6,7,8$ with equality for $k=9$, so taking
$$H^\prime=(1,5,15,35,70,76,83,91,100, 110,100,91,83,76,70,35,15,5,1_{18})$$ gives 5 adjacent non-log-concave entries centered in degrees $4$ to $8$. Taking $j=20$ will give
equality in the log-concavity equation in degree 9. \par
\end{example}
\subsection{Log-concavity of level sequences.} Recall that the \emph{socle} of an Artinian algebra $A$ is $(0:\m_A)$ where $\m_A$ is its maximal ideal. A \emph{level sequence} is a Hilbert function possible for an Artinian algebra $A=R/I, R={\F}[x_1,\ldots,x_r]$ having socle all in the same degree, that is, $H(A)=(1,r,\ldots,t_j,0)$ and the socle of $A=(0:\m_A)=  A_j$: we say $A$ has \emph{type} $t_j$.
The Hilbert functions of level algebras are well known in codimension 2, essentially due to F.H.S.~Macaulay \cite{Mac0} but they are not known for type $t$ in higher codimension except for the Gorenstein codimension three case
$(r,t)=(3,1)$ (for some partial results see \cite{ChoI,Za1}). Given a Hilbert function sequence $H=(\ldots ,h_i,\ldots), $ we denote by $e_i=h_{i-1}-h_i=-\Delta(H)_i$. \par
We have \cite[Proposition 2.6, Lemma~2.15]{I3}
\begin{lemma} The Hilbert function $H=H(A)$ of a level quotient $A=R/I$ of $R={\F}[x,y]$ satisfies
\begin{equation}\label{2leveleq}
H=(1,2,\ldots, d,t_d,t_{d+1},\ldots,t_j,0)\text { where } t_j=e_{j+1}\ge e_j\ge e_{j-1}\ge \cdots d-t_d.
\end{equation}
\end{lemma}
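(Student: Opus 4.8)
The plan is to translate the asserted monotonicity into a statement about the numerator of the Hilbert series, which for a codimension two algebra is controlled by the Hilbert--Burch resolution. First I note that the two endpoint identifications are immediate from the definitions: since $h_{d-1}=d$ and $h_d=t_d$ we have $e_d=h_{d-1}-h_d=d-t_d$, and since $h_{j+1}=0$ we have $e_{j+1}=h_j=t_j$. Writing $c_i:=h_i-2h_{i-1}+h_{i-2}$, a direct computation gives $c_i=e_{i-1}-e_i$, so the chain $e_{j+1}\ge e_j\ge\cdots\ge e_d$ is equivalent to the inequalities $c_i\le 0$ for $d+1\le i\le j+1$; that is, I must show the Hilbert function is concave throughout the range $[d,j+1]$.

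To access the $c_i$ I would invoke the Hilbert--Burch theorem: the minimal graded free resolution of $A=R/I$ takes the form $0\to\bigoplus_{k=1}^{m}R(-b_k)\xrightarrow{\phi}\bigoplus_{k=1}^{m+1}R(-a_k)\to R\to A\to 0$, with $I$ generated by the maximal minors of $\phi$. Since $(1-s)^2 H_A(s)$ equals the alternating sum of the shifts appearing in the resolution, one obtains $(1-s)^2 H_A(s)=1-\sum_{k}s^{a_k}+\sum_{k}s^{b_k}=:\sum_i c_i s^i$, matching the $c_i$ above. The key structural input is the dictionary between the level condition and the syzygy degrees: passing to the canonical module $\omega_A=\operatorname{Ext}^2_R(A,R(-2))$, whose minimal generators are Matlis dual to the socle of $A$ and sit in degrees determined by the $b_k$, one sees that $A$ is level of socle degree $j$ and type $t$ exactly when all the $b_k$ equal a common value $b$, with $m=t$ and $j=b-2$ (as one checks on the complete intersection $\F[x,y]/(x^p,y^q)$, where $b=p+q$ and $j=p+q-2$). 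Minimality of $\phi$ forces each entry $\phi_{k\ell}$, of degree $b-a_k$, to lie in $\m$, so $a_k\le b-1$ for every generator.

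With all syzygies in degree $b=j+2$, the numerator becomes $K(s)=1-\sum_{k}s^{a_k}+m\,s^{b}$. For $1\le i\le b-1$ the term $m\,s^b$ contributes nothing and the constant $1$ contributes nothing, so $c_i=-\#\{k:a_k=i\}\le 0$; in particular $c_i\le 0$ on the entire range $d+1\le i\le j+1=b-1$, which is exactly the concavity needed. (The single positive coefficient $c_{b}=m=t$ occurs at $i=b=j+2$, outside the asserted range, and merely records the drop from $e_{j+1}=t$ to $e_{j+2}=0$.) I expect the main obstacle to be the middle step: cleanly establishing that the level hypothesis is equivalent to the equality of all the syzygy degrees $b_k$, together with the numerical dictionary $m=t$, $j=b-2$. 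This is where the socle-degree information enters, and it must be justified through the duality between the socle of $A$ and the generators of the canonical module rather than read off the Hilbert function alone; once it is in hand, the Hilbert-series bookkeeping is routine. As a cross-check one can run the argument on $A=\F[x,y]/(xy,x^5,y^5)$, where $H=(1,2,2,2,2)$, $b=6$, $m=t=2$, and $K(s)=1-s^2-2s^5+2s^6$ indeed has all coefficients in degrees $1,\dots,5$ non-positive.
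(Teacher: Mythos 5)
Your argument is correct, but note that the paper does not actually prove this lemma: it is stated with a bare citation to \cite[Proposition 2.6, Lemma 2.15]{I3}, so you have supplied a self-contained proof where the paper supplies none. Your route --- reduce the chain $e_{j+1}\ge e_j\ge\cdots\ge e_d$ to non-positivity of the coefficients $c_i=e_{i-1}-e_i$ of $(1-s)^2H_A(s)$ for $d+1\le i\le j+1$, then read those coefficients off the Hilbert--Burch resolution after showing that levelness forces all relation degrees $b_k$ to equal $j+2$ --- is the standard structural argument, and it is essentially the same mechanism (generator and relation degrees of a codimension two ideal controlling the second difference of $H$) that underlies the cited result in \cite{I3}; it is also a close cousin of the elementary multiplication-by-$x$ argument the paper does spell out inside the proof of Theorem \ref{cod3thm} for observation (ii) there, which yields only monotonicity of $\dim_\F I_p$ rather than the finer concavity statement needed here. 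The one step you flag as the main obstacle, the equivalence of the level condition with $b_1=\cdots=b_m=j+2$ and $m=t$, is indeed where the content lies, and your sketch via $\omega_A=\operatorname{Ext}^2_R(A,R(-2))$ and the duality between minimal generators of $\omega_A$ (in degrees $2-b_k$) and socle elements of $A$ (in degrees $b_k-2$) is the right justification; together with the observation that minimality (every row of $\phi$ is nonzero with entries in $\m$, or simply $I_{j+1}=R_{j+1}$) forces $a_k\le j+1$, the coefficient bookkeeping closes the proof. The endpoint identifications $e_{j+1}=t_j$ and $e_d=d-t_d$, and the initial segment $(1,2,\ldots,d)$ coming from the definition of $d$ as the initial degree of $I$, are handled correctly, and your worked example $\F[x,y]/(xy,x^5,y^5)$ checks out.
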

A \emph{compressed} level algebra of codimension $r$, socle degree $j$ and type $t$ is one having Hilbert function
\begin{equation}\label{compeq}
H(A)_i=\min\{r_i, t r_{j-i}\},
\end{equation}
where $r_i=\dim_{\F} R_i$ and $R={\F}[x_1,\ldots,x_r]$ (see  \cite{I2} and \cite[Definition 1.3]{Bo}).
For example, with $(r,t,j)=(3,2,5) $ the compressed level Hilbert function $H=(1,3,6,10,6,2)$.
\begin{proposition} (i). Every codimension two level sequence is log-concave.  (ii). Every compressed level Hilbert function in any codimension is log-concave.
\end{proposition}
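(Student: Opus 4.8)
The plan is to treat the two parts separately, in each case reducing log-concavity either to concavity of the sequence (part (i)) or to a crossover analysis of a pointwise minimum (part (ii)).

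For part (i), I would first read off the entire first-difference sequence from Equation \eqref{2leveleq}. In codimension two the algebra agrees with $R=\F[x,y]$ below the initial degree $d$ of $I$, so $h_i=i+1$ and hence $\Delta H_i=1$ for $1\le i\le d-1$; for $d\le i\le j+1$ we have $\Delta H_i=-e_i$, and the displayed chain $e_{j+1}\ge e_j\ge\cdots\ge d-t_d$ says exactly that $e_i$ is non-decreasing, i.e.\ that $\Delta H_i$ is non-increasing on $[d,j+1]$. Since $h_d=t_d\le\dim_\F R_d=d+1$ forces $\Delta H_d=t_d-d\le 1=\Delta H_{d-1}$, the difference sequence $\Delta H$ is non-increasing on all of $[1,j+1]$, i.e.\ $H$ is \emph{concave}: $h_{i-1}+h_{i+1}\le 2h_i$ for $1\le i\le j-1$. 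As every $h_i$ with $0\le i\le j$ is positive, the arithmetic-geometric mean inequality then gives $h_{i-1}h_{i+1}\le\bigl(\tfrac{h_{i-1}+h_{i+1}}{2}\bigr)^2\le h_i^2$. Thus part (i) is really the remark that a positive concave sequence is automatically log-concave.

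For part (ii), write $H_i=\min\{r_i,\,t\,r_{j-i}\}$ with $r_i=\binom{i+r-1}{r-1}$, and assume $r\ge 2$ (the case $r=1$ is trivial). The first step is to note that each competing sequence is log-concave in $i$: from $r_{i+1}/r_i=(i+r)/(i+1)=1+(r-1)/(i+1)$ the ratios are non-increasing, so $(r_i)$ is log-concave, and applying this at index $j-i$ shows $(t\,r_{j-i})$ is log-concave in $i$ too. The point where I expect the real work to lie is that a pointwise minimum of log-concave sequences need not be log-concave, so the burden falls on the crossover. Here I would use that $r_i$ is strictly increasing while $t\,r_{j-i}$ is strictly decreasing, so $r_i-t\,r_{j-i}$ is strictly increasing and there is a single threshold $c$ with $H_i=r_i$ for $i\le c$ and $H_i=t\,r_{j-i}$ for $i\ge c+1$; evaluating at $i=0$ and $i=j$ shows $0\le c\le j-1$.

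With this splitting, every three-term window contained in one region is log-concave by the log-concavity of $(r_i)$, so only the two mixed windows, at $i=c$ and $i=c+1$, remain. For $i=c$ the defining inequality $t\,r_{j-c-1}\le r_{c+1}$ (valid because $H_{c+1}=t\,r_{j-c-1}$) would give
\[
H_{c-1}H_{c+1}=r_{c-1}\cdot t\,r_{j-c-1}\le r_{c-1}r_{c+1}\le r_c^2=H_c^2,
\]
the last step being log-concavity of $(r_i)$; for $i=c+1$ the inequality $r_c\le t\,r_{j-c}$ (valid because $H_c=r_c$) would give
\[
H_cH_{c+2}=r_c\cdot t\,r_{j-c-2}\le t\,r_{j-c}\cdot t\,r_{j-c-2}\le (t\,r_{j-c-1})^2=H_{c+1}^2,
\]
again using log-concavity of $(r_i)$, now at index $j-c-1$. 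The main obstacle is thus isolating these two crossover windows and checking them; once the monotonicity of the two pieces pins the crossover down, the defining inequalities of the minimum combine with the log-concavity of $(r_i)$ to close both cases, while the boundary values $c=0$ or $c=j-1$ simply drop one mixed window out of the range $1\le i\le j-1$.
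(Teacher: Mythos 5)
Your proof is correct and follows essentially the same route as the paper: part (i) reduces to the fact that a positive concave sequence is log-concave (the paper phrases the same concavity step via the identity $h_i^2\ge h_i^2-(h_i-h_{i-1})^2\ge h_{i-1}h_{i+1}$ rather than AM--GM), and part (ii) uses log-concavity of the $r_i$ on each side of the single crossover together with the two defining inequalities of the minimum at the two mixed windows, exactly as in the paper. Your treatment is slightly more explicit about why the crossover index is unique, but the substance is the same.
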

\begin{proof} The proof of (i) is essentially a translation of the proof of Theorem~\ref{cod3thm}, with Equation \eqref{2leveleq} replacing Equation \eqref{delta2eq}, there.  The proof of (ii) follows from the log-concavity of the sequence $(1,r_1,r_2,\ldots, r_i\ldots)$ where $r_i=\dim_{\sf k} R_i=\binom{r+i-1}{r}$ - as, letting $e=e(r,t,j)$ be the highest degree such that $H(A)_i=r_i$ in Equation \eqref{compeq}, for $i<e$ we have
$h_i^2=r_i^2\ge r_{i-1}\cdot r_{i+1}=h_{i-1}\cdot h_{i+1}$; and $h_e^2=r_e^2\ge r_{e-1}r_{e+1}\ge h_{e-1}h_{e+1}$; and for $i>e+1$ we have $h_i=tr_{j-i}$, so likewise $h_i^2\ge h_{i-1}h_i$; for $i=e+1$ we have, since $ tr_{j-e}\ge h_e,$
 $$h_i^2=t^2r^2_{j-(e+1)}\ge tr_{j-e}\cdot tr_{j-(e+2)}\ge h_e\cdot h_{i+1}.$$
\end{proof}
However, there are codimension three level sequences that are not unimodal - see 
\cite{We,AhS}, the former giving a non-unimodal example with $r=3,t=5$.\par
\subsubsection{Problems}
A \emph{pure} O-sequence is a Hilbert function $H=H(A)$ that can occur for a monomial level algebra $A=R/I$ of socle degree $j$: that is, $I_j$ has a basis of monomials. In codimension three, a type two monomial Artinian algebra is known to be weak Lefschetz (there is a linear form $\ell$ such that $m_\ell: A_i\to A_{i+1}$ has maximum rank for each $i$), provided $\F$ has characteristic zero \cite[Theorem 6.2]{Bo-Z}.  This implies that $H(A)$ is differentiable (the first difference is an O-sequence) until its maximum value, which may be repeated, then $H(A)$ is decreasing \cite[Theorem~ 12]{MNZ2}).  Although their Hilbert functions (codimension three, type two) are characterized in \cite[Proposition 6.1]{Bo-Z}, it remains to see if these sequences are log-concave.  \par
There is a subtle connection of pure O-sequences with algebras associated to matroids
(see \cite{CM,C-T,H,MuNY}).\par
Recall that a sequence $H=(1,2,\ldots,d-1,d,h_d,h_{d+1},\cdots,h_j)$ is \emph{admissible of decreasing type} if $d\ge h_d\ge \cdots$) and there is
$b\ge d$ such that 
\begin{equation}
d=h_{d-1}=\cdots =h_{b-1}>h_b>h_{b+1}>\cdots>h_j.
\end{equation}
Recall that an $\sf h$-vector of a Gorenstein domain $D$ of dimension $s$ is the Hilbert function $H(A), A=D/(\ell_1,\ldots, \ell_s)$ where $\ell_1,\ldots ,\ell_s$ is a regular sequence of linear forms. The $\sf h$-vectors of Gorenstein domains are a proper subclass of Artinian Gorenstein sequences.  When $H=(1,2,\ldots, d,h_d,h_{d+1},\dots ,h_j )$ the condition for $H$ to be an $\sf h$-vector is for $H$ to be admissible of decreasing type \cite{GP}. The AG sequence $H=(1,3,\ldots )$ as in Lemma \ref{1lem} is an $\sf h$-vector of a Gorenstein domain if and only if the difference sequence of Equation \eqref{delta2eq} is admissible of decreasing type \cite{DV},\cite[Theorem 2.18]{Va}.  A characterization of the Hilbert function of Gorenstein domains in higher codimension $h_1\ge 4$ is not known. Also, it is apparently still open whether such $\sf h$-vectors  $(1,h_1,\ldots)$ of domains with $h_1\ge 4$ would be log-concave \cite[Conjecture 5.2]{Bre}. \par While the deformation properties of Artinian algebras have been widely studied (see, for example, \cite{EmI,Di,BoI,CJN,AEI}) it appears open to check whether there are special properties, pertaining, say to smoothability - deformations to a smooth scheme - depending on the log-concavity of the Hilbert functions $H$. What can we say about the irreducible components of the variety $\G_T$ parametrizing graded Artinian algebras of Hilbert function $H$ when $r\ge 4$ and $H$ is log-concave?\par
Subsequent to our work, and partly inspired by it, F. Zanello has further studied which level sequences and pure O-sequences are log-concave, resolving the problem in some cases and proposing additional problems \cite{Za2}.

\begin{ack} We appreciate discussions with Chris McDaniel, who proposed the problem of determining which Artinian Gorenstein sequences might be log-concave, and suggested the reference \cite{H}.  Juan Migliore suggested a mention of level sequences of codimension three, which need not be log-concave, and asked about log-concavity for pure level $O$ sequences of low type in codimension three. We appreciate comments of F. Zanello. We are thankful for helpful comments of the referee.\par
I am grateful for the long friendship and collaboration with Jacques Emsalem, which began in 1975-1976 during an NSF-CNRS sponsored year in Paris - I was visiting the group of Monique Lejeune-Jalabert, Bernard Teissier, and L\^{e} Dung Trang. They put Jacques and I in contact, as we were working on the same problem of parametrizing nets of conics (see the translation and revision with Nancy Abdallah \cite{AEI}). We were very recently collaborating on a sequel project when, after an illness, Jacques passed on July 12, 2022.
This note is very much in the hands-on style of our work together.
\end{ack}
\small

\end{document}